\theoremstyle{definition}
\theoremstyle{plain}
\newtheorem{theorem}{Theorem}
\newtheorem{prop}{Proposition}
\newtheorem{lemma}{Lemma}
\theoremstyle{remark}
\newcommand{\conv}{\mathbf{conv}}
\newcommand{\Scal}{\mathcal{S}}
\newcommand{\Ncal}{\mathcal{N}}
\newcommand{\diag}{\mathbf{diag}}
\newcommand{\Dcal}{\mathcal{D}}
\title{Relaxing nonconvex quadratic functions by multiple adaptive diagonal perturbations}
\author{Hongbo Dong\thanks{Department of Mathematics, Washington State University, Pullman, WA, 99164, USA}}
\date{Version: March 11, 2014}                                           
\begin{document}
\maketitle
\begin{abstract}
The current bottleneck of globally solving mixed-integer (nonconvex) quadratically constrained problem (MIQCP) is still to construct strong but computationally cheap
convex relaxations, especially when dense quadratic functions are present.
We propose a cutting surface procedure based on multiple diagonal perturbations to derive strong convex quadratic relaxations for nonconvex quadratic problem with separable constraints. Our resulting relaxation does not use significantly more variables than the original problem, in contrast to many other relaxations based on lifting.
The corresponding separation problem is a highly structured semidefinite program (SDP) with convex but non-smooth objective. We propose to solve this separation problem with a specialized primal-barrier coordinate minimization algorithm. Computational results show that our approach is very promising. 
Firstly, our separation algorithm is at least an order of magnitude faster than interior point methods for 
SDPs on problems up to a few hundred variables. Secondly, on nonconvex quadratic integer problems, our cutting surface procedure provides lower bounds of almost the same strength 
with the ``diagonal" SDP bounds used by (Buchheim and Wiegele, 2013) in their branch-and-bound code Q-MIST, while our procedure is at least an order of magnitude faster on problems with dimension greater than 70. 
Finally, combined with (linear) projected RLT cutting planes proposed by (Saxena, Bonami and Lee, 2011), our procedure provides slightly weaker bounds than
their projected SDP+RLT cutting surface procedure, but in several order of magnitude shorter time. Finally we discuss various avenues to extend our work to design more 
efficient branch-and-bound algorithms for MIQCPs.
\end{abstract}
\vspace{0.05in}
\textbf{Keywords:} Quadratic Programming; Convex Relaxation; Cutting Plane Procedure; \\

\vspace{0.05in}
\noindent \textbf{Mathematics Subject Classification:} 90C10, 90C20, 90C22, 90C25, 90C26, 90C30\\

\section{Introduction}
In this paper we focus on constructing convex quadratic relaxations for the following class of problems,
\begin{equation}\label{eq:qp}
\tag{P}\min_{x\in\Re^n} \  x^T Q x + q^T x \ \ s.t. \ \   x_i \in S_i, \ \forall i \in \{1,..,n\},
 \end{equation}
where $\Re^n$ is the Euclidean space of dimension $n$, $Q$ is indefinite and $S_i \subseteq \Re$ is not necessarily convex. We restrict to this simple structure to simplify discussion and notation in this 
paper. In principle, our approach can be incorporated into branch-and-bound frameworks, such as \cite{FampaLee2013}, to solve general mixed integer nonlinear programing
if the main nonconvexity comes from nonconvex quadratic functions. However, we remark that this formulation (\ref{eq:qp}) already contains
many interesting problem, whose global solution or strong relaxation is of interest in various applications. 


The idea of constructing convex relaxations by diagonal perturbation is not new. The so-called $\alpha$-BB method is a general method to convexify nonlinear functions
by diagonally perturbing their Hessian matrices. See \cite{Skjalwesterlund2012} and references therein. For the important and well-studied case that $S_i = \{0,1\}, \forall i$, 
or equivalently, the Max-Cut problem, the problem (\ref{eq:qp}) can be equivalently reformulated as a binary convex quadratic problem by some diagonal perturbations to $Q$.
 \cite{RendlRinaldiWiegele10} provides a review of solution approaches for this problem and proposes to incorporate some strong SDP-based 
relaxations by solving them using bundle method.
Another relevant line of research \cite{Frangioni_Gentile_2007,  Gunluk_Linderoth_2010, ZhengSunLi2010, DongLinderoth2013} focused on globally solving convex quadratic programming with binary indicator variables, when combined with the so-called perspective constraints, diagonal perturbations are also shown to be very important.

We remark that in many of these approaches, diagonal perturbations are determined using only partial problem information, e.g., Hessian matrices in the problem data and possibly equality constraints. Although in \cite{BillionnetElloumiPlateauQCR} and \cite{ZhengSunLi2010}, the authors determine a single diagonal perturbation by solving a complicated semidefinite program that exploits full problem information, these approaches are computationally very costly and can only be done at the root node in a branch-and-bound tree. After branching,
the problem structure may change and the computed diagonal perturbation may not be useful anymore. On the contrary, our proposed approach in this paper is based on treating diagonal perturbations as cutting surfaces. These cutting surfaces are generated iteratively and adaptively to separate current relaxed solution, hence implicitly we exploit all problem information including other linear and nonlinear inequalities. Furthermore, our separation routine is computationally much cheaper than the SDPs in \cite{BillionnetElloumiPlateauQCR} and \cite{ZhengSunLi2010}, therefore it is possible to revise diagonal perturbations after branching to reflect 
the most updated problem structure.

Many current general purpose global solvers for mixed-integer nonlinear programs are based on the $\alpha$BB and/or the lifting methodology. For the quadratic case, whenever term $x_i x_j$ is present, one introduces the lifted variable $X_{ij}$ with the following so-called RLT constraints
\[
y^{-}_{ij}(x) \leq X_{ij} \leq y^{+}_{ij}(x),
\]
where $L_i \leq x_i \leq R_i$ and 
\[
\begin{aligned}
\ y^{-}_{ij}(x) &:= \max \{R_i x_j + R_j x_i -R_i R_j, L_i x_j + L_j x_i - L_i L_j\},  \\
y^{+}_{ij}(x) &:= \min \{L_i x_j + R_j x_i -L_i R_j, R_i x_j + L_j x_i - R_i L_j\}.
\end{aligned}
\]
We remark that when dense quadratic functions are present, this lifting approach generate a lot of additional variables, which may significantly slow down the whole 
branch-and-bound algorithm.  Anstreicher \cite{Ans07} shows that by combining the SDP constraint with RLT inequalities, one obtains much stronger relaxations than
each of these methods alone, although the resulting SDP+RLT relaxation is generally considered to be computationally expensive. To overcome this difficulty, \cite{SBL08c}
proposes an illuminating approach which generates convex quadratic cutting surfaces by projecting down the SDP+RLT constraints onto the original variable space. Our
work is partially motivated by their work.

Following a different strategy, Burer \cite{BurerChen2013} proposes a competitive branch-and-bound algorithm to solve mixed-binary quadratical constrained programs. Their algorithm is based on an alternative projection augmented Lagrangian algorithm to (approximately) compute the doubly nonnegative relaxations for completely positive reformulations of the original problem. However, this approach is relatively inflexible to represent arbitrary nonconvex structure in $S_i$ (for example, when $S_i$
comprises of integers in a bounded region, or is a union of disjoint intervals), and the bounding algorithm is sensitive to parameter tuning.

Our work is  related to the recent work \cite{Buchheim2013} by Buchheim and Wiegele. In \cite{Buchheim2013}, a branch-and-bound algorithm Q-MIST is designed to 
solve (\ref{eq:qp}) globally, based on solving a ``diagonal" SDP relaxation with interior point methods. Q-MIST is shown to compare favorably to Couenne \cite{couenne}, a general purpose global solver. 
We establish the theoretical connections between our approach and \cite{Buchheim2013}  in Section \ref{sec:equivalence}, and compare the numerical performance in Section \ref{subsec:compBW}.

The full paper is organized as follows. In Section \ref{sec:cutsurf} we derive an iterative cutting surface procedure to construct strong relaxations for (\ref{eq:qp}). In Section \ref{sec:equivalence} we establish the connections between our cutting surface approach and the Buchheim-Wiegele SDP relaxation. Section \ref{sec:sep} is devoted to a 
specialized primal-barrier coordinate minimization algorithm to solve the separation problem in our cutting procedure. Finally, Section \ref{sec:comp} reports our 
numerical results.

\section{Convex cutting surfaces by diagonal perturbations}\label{sec:cutsurf}
For problem (\ref{eq:qp}), consider the following two-dimensional set for each $i$,
\[
D_i := \{ (x_i, x_i^2) | x_i \in S_i\}.
\] 
It is usually possible to fully characterize the convex hull of $D_i$. For simplicity of discussion
we assume that $S_i$ is closed and bounded in this paper, and denote $L_i := \min\{x | x \in S_i\}$ and 
$R_i := \max\{x | x \in S_i\}$. 
Further let us denote  $\ell_i(\cdot)$ the lower convex envelop of $D_i$, i.e., the largest convex function defined on $[L_i, R_i]$,  such that $\ell_i(x) \leq x^2, \forall x\in S_i$,
and $u_i(\cdot)$ the upper concave envelop, i.e., the smallest concave function defined on $[L_i, R_i]$ such that $u_i(x)\geq x^2, \forall x \in S_i$.
We have the following simple characterizations.
\begin{prop}
Let $S_i$, $D_i$, $L_i$, $R_i$, $\ell_i(\cdot)$ and $u_i(\cdot)$ be defined as above, then
\begin{enumerate}
\item $\conv(D_i) = \{(x,y) | \ell_i(x) \leq y \leq u_i(x)\}$;
\item $x^2 \leq \ell_i(x) \leq u_i(x)$, $\forall x \in [L_i, R_i]$;
\end{enumerate}
\end{prop}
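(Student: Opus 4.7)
The strategy is to identify the lower and upper boundaries of the convex set $\conv(D_i)$ with $\ell_i$ and $u_i$ respectively, from which both claims will follow.

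Since $S_i$ is closed and bounded, $D_i$ is compact and so is $\conv(D_i)$. Its projection onto the first coordinate is exactly $[L_i, R_i]$, so for each $x \in [L_i,R_i]$ the vertical slice $\{y : (x,y) \in \conv(D_i)\}$ is a nonempty bounded interval, and I may define $\underline{f}(x) := \min\{y : (x,y) \in \conv(D_i)\}$ and $\overline{f}(x) := \max\{y : (x,y) \in \conv(D_i)\}$. A standard argument (taking convex combinations of points on their graphs) shows $\underline{f}$ is convex, $\overline{f}$ is concave, and $\conv(D_i) = \{(x,y) : L_i \le x \le R_i,\ \underline{f}(x) \le y \le \overline{f}(x)\}$. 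So to prove claim 1 it suffices to show $\underline{f} = \ell_i$ and $\overline{f} = u_i$.

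For the lower boundary I use a two-sided argument. Since $(x, x^2) \in D_i$ for each $x \in S_i$, we have $\underline{f}(x) \leq x^2$ there; as $\underline{f}$ is convex on $[L_i,R_i]$, it qualifies as a candidate in the maximization defining $\ell_i$, giving $\underline{f} \leq \ell_i$. Conversely, the epigraph of $\ell_i$ (restricted to $[L_i,R_i] \times \mathbb{R}$) is convex and contains every $(x, x^2)$ with $x \in S_i$ (because $\ell_i(x) \le x^2$ there), so it contains $\conv(D_i)$; reading off vertical slices yields $\underline{f} \geq \ell_i$. The argument for $\overline{f} = u_i$ is entirely symmetric, using hypographs of concave functions in place of epigraphs.

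Claim 2 then follows painlessly. For $x^2 \leq \ell_i(x)$, observe that $x \mapsto x^2$ is itself a convex function on $[L_i,R_i]$ trivially satisfying $x^2 \leq x^2$ on $S_i$, so by the maximality clause in the definition of $\ell_i$ we get $x^2 \leq \ell_i(x)$ throughout $[L_i,R_i]$. The inequality $\ell_i(x) \leq u_i(x)$ is immediate from claim 1, since the vertical slice of $\conv(D_i)$ at $x$ is nonempty. The whole proof is essentially bookkeeping in convex analysis; the only step requiring a moment's care is the direction of the inequality between $\underline{f}$ and $\ell_i$, since $\ell_i$ is defined as the \emph{largest} (not smallest) convex minorant and one must argue both $\underline{f} \leq \ell_i$ and $\underline{f} \geq \ell_i$ by genuinely different means.
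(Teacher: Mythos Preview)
Your proof is correct, but it follows a genuinely different route from the paper's.

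For claim~1, the paper argues by separation: the inclusion $\conv(D_i)\subseteq\{(x,y)\mid \ell_i(x)\le y\le u_i(x)\}$ is immediate, and for the reverse inclusion it supposes a point $(\bar x,\bar y)$ lies in the right-hand set but outside $\conv(D_i)$, invokes a separating hyperplane $ax+by\ge c$, and derives a contradiction with the extremality of $\ell_i$ or $u_i$ by case analysis on the sign of $b$. You instead define the lower and upper boundary functions $\underline f,\overline f$ of $\conv(D_i)$ directly and prove $\underline f=\ell_i$ (and symmetrically $\overline f=u_i$) by a two-sided squeeze: maximality of $\ell_i$ gives $\underline f\le\ell_i$, while the observation that the epigraph of $\ell_i$ is a convex set containing $D_i$ gives $\underline f\ge\ell_i$. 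Your argument is more elementary in that it avoids the separating hyperplane theorem entirely; the paper's argument, on the other hand, makes the role of supporting linear functionals explicit, which ties in naturally with the cutting-plane viewpoint developed later.

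For claim~2, both proofs obtain $x^2\le\ell_i(x)$ from the maximality of $\ell_i$ (the paper phrases this via $\max\{x^2,\ell_i(x)\}$). For $\ell_i\le u_i$, the paper argues directly that $u_i-\ell_i$ is concave and nonnegative at the endpoints $L_i,R_i\in S_i$, hence nonnegative on all of $[L_i,R_i]$; you instead read it off claim~1 via nonemptiness of the vertical slices of $\conv(D_i)$. Your derivation is slightly cleaner here since it reuses what was just proved.
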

\begin{proof} 
We prove (1) first. Since $\conv(D_i)$ is the smallest convex set that contains $D_i$, 
$\conv(D_i) \subseteq \{(x,y) | \ell_i(x) \leq y \leq u_i(x)\}$. To show the opposite inclusion, we assume
otherwise that $\ell_i(\bar{x}) \leq \bar{y} \leq u_i(\bar{x})$ and $(\bar{x},\bar{y}) \notin \conv(D_i)$.
Since $D_i$ is compact,  there exists scalars $a, b$ and $c$ such that at least one of $a$ and $b$ is non-zero, and 
\[
a \bar{x} + b \bar{y} < c, \ and \ a x + b x^2 \geq c, \forall x \in D_i. 
\]
Note the case of $b=0$ implies $\bar{x} \notin [L_i, R_i]$, which contradicts with the implicit assumption 
that $\ell_i(\bar{x})$ and $u_i(\bar{x})$ are well-defined. If $b>0$, we rescale such that $b=1$. 
Then $x^2 \geq -ax + c, \ \forall x \in D_i$ and $\bar{y} < -a\bar{x} + c$. One can then verify that $\hat{\ell}_i(x) := \max(\ell_i(x), -ax+c)$
is a larger convex function such that $\hat{\ell}_i(x) \leq x^2, \ \forall x \in D_i$, which
contradicts with the assumption that $\ell_i(\cdot)$ is the lower convex envelop. The case of $b<0$ is similar.

To prove (2), note that function $x^2$ is a convex function. By the definition of $\ell_i(\cdot)$, 
we must have 
$\max \{ x^2, \ell_i(x) \} \leq \ell_i(x), \forall x\in [L_i, R_i]$, i.e., $x^2 \leq \ell_i(x), \forall x\in [L_i, R_i]$. 
Further $u_i(x) - \ell_i(x)$ is a concave function such that $u_i(x) - \ell_i(x) \geq 0$ for all
$x\in S_i$, including $L_i$ and $R_i$. Therefore we must have $u_i(x) - \ell(x) \geq 0$ for all $x \in [L_i, R_i]$.
\end{proof}

Note that simply studying valid inequalities for the convex hull of the feasible region of (\ref{eq:qp}) 
typically does not provide satisfactory lower bounding approach. For example in the BoxQP case, $S_i = [0,1]$
and $\conv\{ x | x_i \in S_i, i=1,...,n\}$ only provides the trivial box constraints.  To incorporate  
information of the objective function, we first rewrite (\ref{eq:qp}) as a quadratically constrained problem,
\[
\min_{x, v} \ v + q^T x \ \ s.t. \ \ v \geq x^T Q x, \ x_i \in S_i,
\]
and study valid constraints in the space of $(x,v)$.
In this paper we study convex valid constraints obtained by perturbing the quadratic form $x^T Q x$ with separable terms.
Given a vector $d \in \Re^n$, consider the inequality
\begin{equation}\label{eq:kut0}
\begin{aligned}
v \geq & x^T Q x + \sum_{i=1}^n \left( d_i x_i^2 -  d_i y_i (x_i) \right) = x^T (Q+\diag(d))x  - \sum_{i=1}^n  d_i y_i (x_i), \\
\end{aligned}
\end{equation}
where $y_i(x_i)$ is some uni-variate function of $x_i$, whose form possibly depends on the sign of  $d_i$.  We remark that $y_i(x_i)$ can be thought as a ``compensating term" for the perturbation $x_i^2$. Now we consider conditions under which (\ref{eq:kut0}) is valid and convex.
First of all, it is valid if $d_i(x_i^2 -y_i (x_i)) \leq 0, \forall i$. That is,  $y_i (x_i) \geq x_i^2$ when $d_i > 0$ and $y_i (x_i) \leq x_i^2$ when $d_i < 0$.
Secondly, to guarantee the overall convexity, in addition to $Q + \diag(d) \succeq 0$, we require $y_i(x_i)$ to be concave when $d_i > 0$, and convex when 
$d_i < 0$. 
Finally, since it is preferable to have $d_i(x_i^2 - y_i (x_i))$ as large (close to $0$) as possible, natural choices of $y_i(x_i)$ are the lower and upper envelops of $D_i$, i.e.,
\[
y_i(x_i) = \begin{cases}\ell_i(x_i), & d_i < 0; \\ u_i(x_i), & d_i > 0.\end{cases}
\]
Hence we focus on convex valid constraints in the following form, which is parametrized by a vector 
$d$ where $ Q+\diag(d) \succeq 0$,
\begin{equation}\label{eq:kut}
\tag{CUT}v \geq x^T Q x  + \sum_{i: d_i<0} d_i (x_i^2 - \ell_i(x_i)) +  \sum_{i: d_i>0} d_i (x_i^2 - u_i(x_i)).
\end{equation}
Given $(\bar{x},\bar{v})$, the corresponding separation problem is the following convex program
\begin{equation}\label{eq:sep}
\tag{SEP} \begin{aligned}
\inf_{d \in \Re^n} \ & \sum_{i=1}^n g_i(d_i) \\
& Q + \diag(d) \succeq 0,
\end{aligned}
\end{equation}
where $g_i(d_i) \left( := \begin{cases} 
 (\ell_i(\bar{x}_i) - \bar{x}_i^2) d_i, & d_i < 0, \\
(u_i(\bar{x}_i) - \bar{x}_i^2) d_i, &  d_i \geq 0.
\end{cases}\right)$
is a convex function because $\bar{x}_i^2 \leq \ell_i(\bar{x}) \leq u_i(\bar{x})$.
For the convenience of discussion later, we use $\alpha_i$ and $\beta_i$ to denote the corresponding linear coefficients, i.e.,
\[g_i(d_i) := \begin{cases} 
\alpha_i \cdot d_i, & d_i < 0, \\
\beta_i \cdot d_i, &  d_i \geq 0.
\end{cases}\]
Note that we have $0\leq \alpha_i \leq \beta_i$, and the set of optimal solutions to (\ref{eq:sep})
is bounded if and only if $\beta_i > 0, \forall i$.
A feasible vector $d$ defines a valid constraint (\ref{eq:kut}) that cuts off $(\bar{x}, \bar{v})$ 
as long as
\[
\sum_{i} g_i(d_i) < -\bar{v} +\bar{x}^T Q \bar{x}.
\]

We remark that (\ref{eq:sep}) is in a highly structured form. For example, if $\alpha_i = \beta_i, \forall i$, (\ref{eq:sep}) corresponds to the dual problem of the well-know 
Max-Cut problem.

Provided that the univariate functions $\ell_i(\cdot)$ and $u_i(\cdot)$ can be represented in a tractable manner,
for any finite set $\Dcal \subseteq \left\{d \ \middle| \ Q + \diag(d) \succeq 0 \right\}$, the following 
problem is a tractable convex relaxation to (\ref{eq:qp}),
\begin{equation} \label{eq:myrelax}
\tag{DiagR}\begin{aligned}
\mu_{\Dcal} := \min_{v, x} & \ \ \ v + q^T x \\
s.t. & \ \ v \geq x^T Q x + \sum_{i: d_i<0} d_i (x_i^2 - \ell_i(x_i)) +  \sum_{i: d_i>0} d_i (x_i^2 - u_i(x_i)), \ \ \forall d \in \Dcal \\
& \ \ L_i \leq x_i \leq R_i, \ \ i=1,...,n.
\end{aligned}
\end{equation}

With an initial choice of $\Dcal$, we can then iteratively solve (\ref{eq:myrelax}) and update $\Dcal$ by adding a new violated constraint (\ref{eq:kut}) by solving (perhaps a perturbed version of) problem (\ref{eq:sep}). This procedure is summarized in Algorithm \ref{alg:cutsurf}.

\begin{algorithm}[h!]
 \label{alg:cutsurf}
 \SetAlgoLined
 \KwData{$Q \in \Scal^n$, $q\in \Re^n$, and black box routines to evaluate $\ell_i(\cdot)$ and $u_i(\cdot)$;}
 \KwResult{A tractable model (\ref{eq:myrelax}) as a convex relaxation of (\ref{eq:qp}).}
 $\Dcal = \left\{ \lambda\cdot e\right\}$, where $e$ is the all-one vector and $\lambda>|\lambda_{\min}(Q)|$ \;
 \For{$k = 1$ \KwTo $maxIter$ }{
 	Solve (\ref{eq:myrelax}); Let $(\bar{x},\bar{v})$ denote an optimal solution\;
	Compute a feasible vector $d^{new}$ by  (approximately) solving (\ref{eq:sep})\;
	\eIf{(\ref{eq:kut}) with $d=d^{new}$ cuts off $(\bar{x}, \bar{v})$}{
		$\Dcal \leftarrow \Dcal \cup \{d^{new}\}$\;}
	{Terminate\;}
 }
 \caption{A cutting surface algorithm to derive a convex relaxation of (\ref{eq:qp})}
\end{algorithm}

It is worth noting that when $\ell_i(\cdot)$ is relatively complicated and $\Dcal$ has more than one vectors, we can strengthen (\ref{eq:myrelax}) by
introducing variables $y_i$. This makes our procedure a ``partial lifting" procedure. 
\begin{equation} \label{eq:myrelax+}
\tag{DiagR+}\begin{aligned}
\min_{v, x} & \ \ \ v + q^T x \\
s.t. & \ \ v \geq x^T Q x + \sum_{i: d_i<0} d_i (x_i^2 - y_i) +  \sum_{i: d_i>0} d_i (x_i^2 - y_i), \ \ \forall d \in \Dcal \\
& \ \ \ell_i(x_i) \leq y_i \leq u_i(x_i), \ \forall i \\
& \ \ L_i \leq x_i \leq R_i, \ \ \forall i.
\end{aligned}
\end{equation}
However, in all of our computational results later, (\ref{eq:myrelax+}) seems providing same level of bounds with (\ref{eq:myrelax}).

\section{Connection with Buchheim-Wiegele's SDP relaxation}\label{sec:equivalence}
In this section we show that our cutting surface procedure is closely related with a semidefinite relaxation for (\ref{eq:qp}) proposed in \cite{Buchheim2013}, where the authors proposed to globally solve (\ref{eq:qp}) based on solving the following SDP relaxation at each node,
\begin{equation} \label{eq:buchSDP}
\tag{BW}
\begin{aligned}
\mu_{BW} := \min_{x, X} & \ \ \ \langle Q, X\rangle + q^T x \\
s.t. & \ \ \ \ell_i(x_i) \leq X_{ii} \leq u_i(x_i), \\
& \ \ \begin{bmatrix}1 & x^T \\ x& X\end{bmatrix} \succeq 0.
\end{aligned}
\end{equation}
In fact they solve (\ref{eq:buchSDP}) iteratively using interior point methods for SDPs and treat the constraints
\[
\ell_i(x_i) \leq X_{ii} \leq u_i(x_i).
\]
as cutting planes.

We show that our cutting surface procedure is in fact equivalent to (\ref{eq:buchSDP}) in a weak sense. 
First, in Theorem \ref{thm:weak}, we show (\ref{eq:myrelax}) cannot be stronger than (\ref{eq:buchSDP}) for any $\Dcal \subseteq \{d \ |\ Q+\diag(d) \succeq 0\}$.
Then in Theorem \ref{thm:strong}, we show that if for a certain choice of $\Dcal$, (\ref{eq:myrelax}) is strictly weaker than (\ref{eq:buchSDP}), we can cut off current relaxed solution by adding a new vector into 
$\Dcal$.
\begin{theorem}\label{thm:weak}
For any set $\Dcal \subseteq \left\{d \middle| Q + \diag(d) \succeq 0 \right\}$, $\mu_{BW} \geq \mu_{\Dcal}$.
\end{theorem}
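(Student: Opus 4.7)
The plan is to construct, from any feasible solution of (BW), a feasible solution of (DiagR) with the same objective value, which establishes $\mu_{BW} \geq \mu_{\Dcal}$.

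First I would take an arbitrary feasible $(x^*, X^*)$ for (BW) and define the candidate $v^* := \langle Q, X^*\rangle$ so that the objective is preserved. The bound-box constraints $L_i \leq x_i \leq R_i$ in (DiagR) are clearly inherited from the (BW) constraint $\ell_i(x_i^*) \leq X_{ii}^*\leq u_i(x_i^*)$ (since $\ell_i, u_i$ are defined on $[L_i, R_i]$), so the crux is verifying the single valid inequality (CUT) for each $d \in \Dcal$.

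The main algebraic manipulation goes as follows. From the PSD constraint in (BW), Schur complement yields $M := X^* - x^*(x^*)^T \succeq 0$. Combining this with $Q + \diag(d) \succeq 0$ gives
\begin{equation*}
0 \;\leq\; \langle Q + \diag(d), M\rangle \;=\; \langle Q, X^*\rangle - (x^*)^T Q x^* + \sum_i d_i\bigl(X_{ii}^* - (x_i^*)^2\bigr).
\end{equation*}
Rearranging, I obtain
\begin{equation*}
v^* - (x^*)^T Q x^* \;\geq\; \sum_i d_i\bigl((x_i^*)^2 - X_{ii}^*\bigr).
\end{equation*}
Then I would split the sum on the right by the sign of $d_i$ and invoke the (BW) constraint $\ell_i(x_i^*) \leq X_{ii}^* \leq u_i(x_i^*)$: when $d_i < 0$, multiplying the lower bound $X_{ii}^* \geq \ell_i(x_i^*)$ by $d_i$ flips the inequality to give $-d_i X_{ii}^* \geq -d_i \ell_i(x_i^*)$; when $d_i > 0$, the upper bound $X_{ii}^* \leq u_i(x_i^*)$ yields $-d_i X_{ii}^* \geq -d_i u_i(x_i^*)$. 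Summing these termwise yields
\begin{equation*}
\sum_i d_i\bigl((x_i^*)^2 - X_{ii}^*\bigr) \;\geq\; \sum_{i:d_i<0} d_i\bigl((x_i^*)^2 - \ell_i(x_i^*)\bigr) + \sum_{i:d_i>0} d_i\bigl((x_i^*)^2 - u_i(x_i^*)\bigr),
\end{equation*}
which is precisely the (CUT) inequality for $(x^*, v^*)$ and the given $d$. Since $d \in \Dcal$ was arbitrary, $(x^*, v^*)$ is feasible for (DiagR) with objective $v^* + q^T x^* = \langle Q, X^*\rangle + q^T x^*$, and the theorem follows.

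There is no serious obstacle here; the result is essentially a direct consequence of Schur complement and a careful bookkeeping of signs when applying the per-index envelope bounds. The only subtlety worth flagging is that the two coordinate-wise sign arguments rely on both endpoints $\ell_i$ and $u_i$ being available as bounds on $X_{ii}^*$ simultaneously, which is exactly what (BW) provides but is not available from, say, a one-sided relaxation.
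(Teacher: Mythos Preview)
Your proof is correct and follows essentially the same approach as the paper's: both arguments hinge on the inequality $\langle Q+\diag(d),\, X-xx^T\rangle \geq 0$ (from $X-xx^T\succeq 0$ via Schur complement and $Q+\diag(d)\succeq 0$), followed by replacing each $X_{ii}$ with $\ell_i(x_i)$ or $u_i(x_i)$ according to the sign of $d_i$. The only cosmetic difference is that the paper first rewrites (BW) in $(x,v)$-form and then verifies the key inequality, whereas you construct the feasible pair $(x^*,v^*)$ for (DiagR) directly; the underlying computation is identical.
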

\begin{proof}
Note that $\ell_i(x_i) \leq u_i(x_i)$ implies $L_i \leq x_i \leq R_i$, the problem (\ref{eq:buchSDP})
is equivalent to
\begin{equation}\label{eq:BW_reform}
\begin{aligned}
\min_{x, v} \ \ & \ \  v + q^T x \\
s.t. &  \ \ L_i \leq x_i \leq R_i, \\
& \ \ v =  \min_{X} \left\{ \langle Q, X\rangle \ \middle| \  X \succeq xx^T, \ell_i(x_i) \leq X_{ii} \leq u_i(x_i) \right\}.
\end{aligned}
\end{equation}
 It suffices to show that for any $(x,X)$ feasible in (\ref{eq:buchSDP}), and any $d \in \Dcal$
\[
\langle Q, X \rangle \geq x^T Q x + \sum_{i: d_i<0} d_i (x_i^2 - \ell(x_i)) +  \sum_{i: d_i>0} d_i (x_i^2 - u(x_i)).
\]
By re-arranging terms, this inequality is equivalent to 
\[
\langle Q + \diag(d), X-xx^T \rangle - 
\sum_{i: d_i<0} d_i (X_{ii} - \ell(x_i)) -  \sum_{i: d_i>0} d_i (X_{ii} - u(x_i))
\geq 0,
\]
which is valid for any $d \in \Dcal$ as $Q+\diag(d) \succeq 0$.
\end{proof}
\begin{theorem}\label{thm:strong}
Suppose that $\Dcal \subseteq \{d | Q+\diag(d) \succeq 0\}$, $\mu_\Dcal < \mu_{BW}$, and that 
$(\bar{x}, \bar{v})$ is an optimal solution to (\ref{eq:myrelax}), then there exists a new vector 
$\hat{d}$ such that $Q+\diag(\hat{d}) \succeq 0$ and 
\[
\bar{v} < \bar{x}^T Q  \bar{x}  + \sum_{i: \hat{d}_i<0} \hat{d}_i ( \bar{x}_i^2 - \ell( \bar{x}_i)) +  \sum_{i: \hat{d}_i>0} \hat{d}_i ( \bar{x}_i^2 - u( \bar{x}_i)).
\]
\end{theorem}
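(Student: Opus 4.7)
The plan is to combine the reformulation (\ref{eq:BW_reform}) used in the proof of Theorem \ref{thm:weak} with SDP duality applied to the separation problem (\ref{eq:sep}). Define
\[
v_{BW}(x) := \min_X \left\{ \langle Q, X \rangle \;\middle|\; X \succeq x x^T,\ \ell_i(x_i) \leq X_{ii} \leq u_i(x_i),\ \forall i \right\},
\]
so that $\mu_{BW} = \min_x\{v_{BW}(x) + q^T x : L_i \leq x_i \leq R_i\}$. Since $(\bar x, \bar v)$ is optimal in (\ref{eq:myrelax}), we have $L_i \leq \bar x_i \leq R_i$, so $\bar x$ is feasible in this outer minimization, giving $\mu_{BW} \leq v_{BW}(\bar x) + q^T \bar x$. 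Combined with $\mu_\Dcal = \bar v + q^T \bar x < \mu_{BW}$, this yields the strict inequality $\bar v < v_{BW}(\bar x)$.

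The next step is to compute the Lagrangian dual of (\ref{eq:sep}). After recasting it as an SDP via epigraphic variables $t_i$ with $t_i \geq \alpha_i d_i$ and $t_i \geq \beta_i d_i$, introducing PSD dual $Y$ for the semidefinite constraint and nonnegative duals for the linear ones, stationarity in $(d,t)$ forces $Y_{ii} \in [\alpha_i, \beta_i]$ and reduces the dual to $\max\{-\langle Q, Y\rangle : Y \succeq 0,\ \alpha_i \leq Y_{ii} \leq \beta_i\}$. Substituting $X = Y + \bar x \bar x^T$ turns $Y \succeq 0$ into the Schur-complement condition $\left[\begin{smallmatrix} 1 & \bar x^T \\ \bar x & X \end{smallmatrix}\right] \succeq 0$, the box on $Y_{ii}$ into $\ell_i(\bar x_i) \leq X_{ii} \leq u_i(\bar x_i)$, and the objective into $\bar x^T Q \bar x - \langle Q, X\rangle$; hence the dual optimal value equals $\bar x^T Q \bar x - v_{BW}(\bar x)$.

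The pivotal step is SDP strong duality. Slater's condition holds for the primal: taking $d = \lambda e$ with $\lambda$ large enough and $t_i$ correspondingly large makes $Q + \diag(d) \succ 0$ with strict slack in the auxiliary inequalities, and under the standing assumption $\beta_i > 0$ (otherwise the corresponding coordinate is degenerate and can be dropped) the primal is bounded below. Consequently the optimal value of (\ref{eq:sep}) equals $\bar x^T Q \bar x - v_{BW}(\bar x)$, which by the first paragraph is strictly less than $\bar x^T Q \bar x - \bar v$.

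Since the infimum of (\ref{eq:sep}) lies strictly below $\bar x^T Q \bar x - \bar v$, one may pick a feasible $\hat d$ (with $Q + \diag(\hat d) \succeq 0$) such that $\sum_i g_i(\hat d_i) < \bar x^T Q \bar x - \bar v$; unpacking $g_i$ yields exactly the claimed violated inequality. I expect the main obstacle to be a clean verification of strong duality, specifically handling edge cases where $\alpha_i = \beta_i$ (so the dual may fail Slater) or where the primal infimum is not attained; in those cases one argues via primal Slater and boundedness alone, picking an $\varepsilon$-optimal $\hat d$ rather than an exact minimizer.
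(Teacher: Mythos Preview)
Your proposal is correct and follows essentially the same route as the paper. Both arguments first use the reformulation (\ref{eq:BW_reform}) to obtain $\bar v < v_{BW}(\bar x)$, then invoke Lagrangian/SDP duality on the same primal--dual pair (the inner $X$-problem versus the $d$-problem) with Slater verified on the $d$-side; the only cosmetic difference is that the paper dualizes the $X$-problem directly while you dualize (\ref{eq:sep}) after an epigraphic reformulation, but the resulting identity and the extraction of a violating $\hat d$ are identical. (A minor remark: your appeal to $\beta_i>0$ for boundedness below is unnecessary, since $Q+\diag(d)\succeq 0$ already forces $d_i\geq -Q_{ii}$ and $\alpha_i\geq 0$, so the primal objective is bounded below regardless.)
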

\begin{proof}
By the reformulation (\ref{eq:BW_reform}), $\mu_\Dcal < \mu_{BW}$ implies that 
\[
\bar{v} -\bar{x}^T Q \bar{x} < \min_{X} \left\{\langle Q, X-\bar{x}\bar{x}^T\rangle \middle| X - \bar{x}\bar{x}^T \succeq 0, \ \ell_i(\bar{x}_i) \leq X_{ii} \leq u_i(\bar{x}_i)\right\}.
\]
Now we derive the Lagrange dual for the minimization on the right hand side. To simplify notation, we 
let $\bar{\ell}_i := \ell_i(\bar{x}_i)$ and  $\bar{u}_i:=u_i(\bar{x}_i)$.
\begin{align*}
\bar{v} -\bar{x}^T Q \bar{x} &< \min_{X}\sup_{\substack{M\succeq 0\\\alpha \geq 0, \beta\geq 0}} 
\langle Q - M, X-\bar{x}\bar{x}^T \rangle  - \sum_{i=1}^n \alpha_i \left(X_{ii} - \ell_i(\bar{x}_i) \right)
-\sum_{i=1}^n \beta_i \left(u_i(\bar{x}_i) - X_{ii}\right) \\
&\leq \sup_{\substack{M\succeq 0\\\alpha \geq 0, \beta\geq 0}} 
\inf_{X} \langle Q - M -\diag(\alpha-\beta), X-\bar{x}\bar{x}^T \rangle -
\sum_{i=1}^n \left[\alpha_i \left(\bar{x}_i^2 - \bar{\ell}_i \right)
+ \beta_i \left(\bar{u}_i - \bar{x}_i^2\right)\right]\\
&= \sup_{\substack{Q-\diag(\alpha-\beta)\succeq 0\\\alpha \geq 0, \beta\geq 0}} -
\sum_{i=1}^n \left[\alpha_i \left(\bar{x}_i^2 - \bar{\ell}_i \right)
+ \beta_i \left(\bar{u}_i - \bar{x}_i^2\right)\right].
\end{align*}
Since the dual problem satisfies the Slater's condition, strong duality holds and the second inequality above
is indeed an equality. Further notice that $\bar{x}_i^2 \leq \bar{\ell}_i \leq \bar{u}_i$, we can assume 
$\min(\alpha_i, \beta_i) = 0, \forall i$ without loss of generality by shifting $\alpha_i$ and $\beta_i$ towards 
zero. Now let $d = \beta - \alpha$. The full inequality implies that there exists $\hat{d} = \hat{\beta} - \hat{\alpha}$
such that $Q + \diag(\hat{d}) \succeq 0$ and 
\[
\bar{v} -\bar{x}^T Q \bar{x} < \sum_{i: \hat{d}_i < 0} \hat{d}_i \left(\bar{x}_i^2 - \ell_i(\bar{x}_i ) \right) + \sum_{i: \hat{d}_i > 0} \hat{d}_i
\left(\bar{x}_i^2 - u_i(\bar{x}_i) \right).
\]
\end{proof}
Note that this result does not necessarily guarantee that the cutting surface algorithm \ref{alg:cutsurf} would generate a sequence of
lower bounds that converges to $\mu_{BW}$. We leave the more detailed analysis for future study while 
focusing on computation in this work.

\section{A Primal-Barrier Coordinate Minimization Algorithm to Solve (\ref{eq:sep})}\label{sec:sep}
To solve (\ref{eq:sep}), it is desirable to use an fast approximate but \textit{strictly feasible} algorithm, i.e., 
we always maintain $d$ such that $Q+\diag(d) \succ 0$. We design a coordinate minimization algorithm for this aim.  Our algorithm is in principle a primal barrier method, 
i.e., we solve the log-det penalty form of (\ref{eq:sep}), and then update the penalty parameter intelligently. Our algorithm is motivated by the so-called ``row-by-row" method
for general SDPs \cite{WenGoldfarb2012}. From now on we assume that $\beta_i > 0$, $\forall i$, and that the optimal solution to (\ref{eq:sep}) is finitely attained. If
this is not the case, we perturb (\ref{eq:sep}) slightly by adding a small positive scalar to all $\alpha_i$ and 
$\beta_i$. We now define the $\log$-$\det$ perturbation to (\ref{eq:sep}) as follows, where $\sigma$ is a positive penalty parameter,
\begin{equation}\label{eq:sep_perturb}
\tag{$\mathbf{SEP}_\sigma$}
\begin{aligned}
\min_{d} \ & \ f(d; \sigma) := \sum_{i=1}^n g_i(d_i) - \sigma \log\det(Q + \diag(d))  \\
 & \  Q + \diag(d) \succ 0.
\end{aligned}
\end{equation}
The sub-differential of $f(d; \sigma)$ is 
\begin{equation}\label{eq:subdiff_f}
\partial f(d; \sigma) = - \sigma \diag\left( \left[Q+\diag(d)\right]^{-1}\right) + \oplus_{i} \partial g_i(d_i),
\end{equation}
where $\oplus_{i} \partial g_i(d_i)$ is the direct product of sub-differentials of $g_i(\cdot)$, which are 
\begin{equation*}
\partial g_i(d_i) = \begin{cases}\alpha_i, & d_i < 0; \\
\left[\alpha_i, \beta_i\right], & d_i = 0; \\
\beta_i, & d_i > 0.
\end{cases}
\end{equation*}
Since the constraint $Q + \diag(d) \succ 0$ defines an open set and cannot be active, the optimality condition of (\ref{eq:sep_perturb}) is
\begin{equation}\label{eq:opt_subd}
0 \in \partial f(d),  \  \ \ Q + \diag(d) \succ 0.
\end{equation}

We solve (\ref{eq:sep_perturb}) in a coordinate minimization manner. In each iteration, we store and update a feasible vector $\bar{d}$ and the
matrix $V = \left[Q + \diag(\bar{d}) \right]^{-1}$.
Motivated by the optimality condition (\ref{eq:opt_subd}),  with a initial feasible vector $\bar{d}$, we choose index $i\in \{1,...,n\}$ with the largest magnitude in the following vector $s(\bar{d})$ to perform the 
minimization,
 \begin{equation}\label{eq:MinNorm_subg}
s\left(\bar{d}\right) := \min\left\{\|u\|_2 \ \middle| \ u \in \partial f\left(\bar{d}; \sigma\right) \right\}, \ \ i = \arg\max_{j} 
\left\{\left|s\left(\bar{d}\right)_j \right|\right\}.
 \end{equation}
Notice that by (\ref{eq:subdiff_f}), $s(d)$ can be evaluated in linear time 
with the information of $V$. With this choice of $i$ we solve the following one-dimensional minimization problem,
\begin{equation}\label{eq:onedim}
\min_{\Delta d_i} \ f(\bar{d}+ \Delta d_i e_i; \sigma) \ \ s.t. \ \ Q + \diag\left(\bar{d}+\Delta d_i e_i\right) \succ 0,
\end{equation}
where $e_i$ is the i-th vector in the canonical basis of $\Re^n$. 
We will later derive a closed form solution to this problem using the problem data and $V$. 
For now we assume $\Delta d_i^*$ is an optimal solution to (\ref{eq:onedim}), then we update $\bar{d}$ by $\bar{d} \leftarrow \bar{d} + \Delta d_i^* e_i$ and $V$  by the Sherman-Morrison formula 
\begin{equation}\label{eq:updateV}
V  \leftarrow V -  \frac{\Delta d_i^*\cdot v_i v_i^T}{1+\Delta d_i^* \cdot V_{ii} },
\end{equation}
where $v_i$ is the i-th column of the previous $V$.

To derive a closed form solution to (\ref{eq:onedim}), we first consider what choices of $\Delta d_i$ guarantee feasibility after the update. 

\begin{lemma} \label{lem:1dimfeas}
Suppose that $\bar{d}$ is a vector such that $ Q + \diag(\bar{d}) \succ 0$ and $V= \left[Q + \diag(\bar{d})\right]^{-1}$, then for each $i$,  
$Q + \diag\left(\bar{d} + \Delta d_i e_i\right) \succ 0$ if and only if $\Delta d_i > -\frac{1}{V_{ii}}$.
\end{lemma}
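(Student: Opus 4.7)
The plan is to write the perturbed matrix as a rank-one update to $V^{-1}$ and then use the matrix determinant lemma together with a continuity argument on eigenvalues. Concretely, I would first observe that
\[
Q + \diag\!\left(\bar{d}+\Delta d_i\,e_i\right) \;=\; \bigl(Q+\diag(\bar d)\bigr) + \Delta d_i\, e_i e_i^T \;=\; V^{-1} + \Delta d_i\, e_i e_i^T,
\]
so the question reduces to characterizing when the symmetric rank-one update $V^{-1}+\Delta d_i\,e_i e_i^T$ is positive definite.

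For the necessity direction, I would appeal to the matrix determinant lemma:
\[
\det\!\bigl(V^{-1}+\Delta d_i\, e_i e_i^T\bigr) \;=\; \det(V^{-1})\bigl(1+\Delta d_i\, e_i^T V e_i\bigr) \;=\; \det(V^{-1})\bigl(1+\Delta d_i\, V_{ii}\bigr).
\]
Since $V^{-1}\succ 0$ implies $V\succ 0$ and hence $V_{ii}>0$, positive definiteness of the updated matrix forces the determinant to be positive, which in turn forces $1+\Delta d_i\, V_{ii}>0$, i.e.\ $\Delta d_i>-1/V_{ii}$.

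For sufficiency, the case $\Delta d_i\ge 0$ is immediate since $V^{-1}+\Delta d_i\, e_i e_i^T\succeq V^{-1}\succ 0$. For the interval $-1/V_{ii}<\Delta d_i<0$, I would run a standard continuity/homotopy argument: consider the one-parameter family $M(t):=V^{-1}+t\,e_i e_i^T$ for $t\in\bigl(-1/V_{ii},0\bigr]$. Since $M(0)\succ 0$ and the eigenvalues of $M(t)$ depend continuously on $t$, the matrix $M(t)$ can only cease to be positive definite at a value of $t$ where $\det M(t)=0$. By the determinant formula above, the only such value in the closed interval $(-\infty,0]$ is $t=-1/V_{ii}$, so $M(t)\succ 0$ for every $t$ strictly greater than $-1/V_{ii}$.

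I do not anticipate a serious obstacle here: the necessity is one line after the matrix determinant lemma, and the only subtlety is making the sufficiency argument airtight. An alternative, if one wishes to avoid the continuity argument, is to note that for $u^T V^{-1} u > 0$ with $u\ne 0$ one has $u^T\!\bigl(V^{-1}+\Delta d_i\, e_i e_i^T\bigr)u = u^T V^{-1} u + \Delta d_i (u_i)^2$, and to minimize the right-hand side over the unit sphere using Lagrange multipliers; this again yields the threshold $-1/V_{ii}$. Either route is a few lines and cleanly establishes the claimed equivalence.
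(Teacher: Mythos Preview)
Your argument is correct. The paper, however, takes a different route: it assumes without loss of generality that $i=n$, partitions both $Q+\diag(\bar d)$ and $V$ into $2\times 2$ block form, and invokes the Schur complement characterization of positive definiteness. The perturbation affects only the lower-right scalar block, so positive definiteness of the perturbed matrix reduces to a single scalar inequality $Q_{nn}+\bar d_n+\Delta d_n - q^T M^{-1}q>0$; a short computation using the block identities from $V(Q+\diag(\bar d))=I$ then rewrites this threshold as $-1/V_{nn}$. The Schur complement route delivers both directions of the equivalence in one stroke and avoids any appeal to continuity of eigenvalues, at the cost of some block-matrix bookkeeping. Your approach, by contrast, is conceptually lighter on linear algebra---the matrix determinant lemma is a one-line identity---but you pay for that simplicity by having to split the argument into necessity (via the determinant) and sufficiency (via a homotopy on $t$). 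Both are short; the paper's version is perhaps more in keeping with the Sherman--Morrison machinery already being used in the surrounding section, while yours would port more readily to settings where the block structure is less convenient.
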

\begin{proof}
Without loss of generality we assume  $i=n$, and 
\[
Q + \diag(\bar{d}) := \begin{bmatrix}M & q\\q^T & Q_{nn}+\bar{d}_n\end{bmatrix}, \ \ 
V = \left[Q + \diag(\bar{d})\right]^{-1} :=  \begin{bmatrix}\tilde{V} & v_n\\v_n^T & V_{nn}\end{bmatrix}.
\] 
Note that $V_{nn}>0$ as $V \succ 0$. By pre-multiplying $\begin{bmatrix}I & - \frac{v_n}{V_{nn}} \\ 0 & Q_{nn}+\bar{d}_n\end{bmatrix}$ to the equation $V(Q+\diag(\bar{d})) = I$,
we obtain
\begin{equation}\label{eq:mat}
\begin{bmatrix}\tilde{V} - \frac{v_nv_n^T}{V_{nn}} & 0\\ \left(Q_{nn}+\bar{d}_n\right) v_n^T & \left(Q_{nn}+\bar{d}_n\right) V_{nn}\end{bmatrix}
\begin{bmatrix}M & q\\q^T & Q_{nn}+\bar{d}_n\end{bmatrix} = 
\begin{bmatrix}I & - \frac{v_n}{V_{nn}} \\ 0 & Q_{nn}+\bar{d}_n\end{bmatrix}.
\end{equation}
Therefore we have $M^{-1} = \tilde{V} - \frac{v_n v_n^T}{V_{nn}}$. Now by the Schur Complement theorem, 
$Q + \diag\left(\bar{d} + \Delta d_n e_n\right) \succ 0$ if and only if 
\begin{equation}\label{eq:delta_condition}
Q_{nn}+\bar{d}_n + \Delta d_n - q^T M^{-1} q > 0 \Leftrightarrow \Delta d_n > -\left(Q_{nn}+\bar{d}_n\right) + q^T \left( \tilde{V} - \frac{v_n v_n^T}{V_{nn}} \right) q.
\end{equation}
By the upper-right block in (\ref{eq:mat}) and the lower-right block in $V(Q+\diag(\bar{d})) = I$, we have
$\left( \tilde{V} - \frac{v_n v_n^T}{V_{nn}} \right) q = -\frac{v_n}{V_{nn}}$ and 
$v_n^T q + \left(Q_{nn}+\bar{d}_n\right) V_{nn} = 1$, then the condition (\ref{eq:delta_condition}) is equivalent to
\[
\Delta d_n > -\left(Q_{nn}+\bar{d}_n\right) -  \frac{q^T v_n}{V_{nn}}  = -\frac{\left(Q_{nn}+\bar{d}_n\right) V_{nn} + q^T v_n}{V_{nn}} = -\frac{1}{V_{nn}}.
\]
\end{proof}

Now we solve (\ref{eq:onedim}) with the constraint of Lemma \ref{lem:1dimfeas} in mind. 
The sub-differential of $f(\bar{d} + \Delta d_i e_i)$ in (\ref{eq:onedim}) is
\[
\partial g_i(\bar{d}_i + \Delta d_i) - \sigma \left\{\left[Q + \diag(\bar{d}) + \Delta d_i E_{ii}\right]^{-1}\right\}_{ii}
\]
where 
\begin{equation}\label{eq:subg_pieces}
\partial g_i(\bar{d}_i + \Delta d_i) = \begin{cases}\alpha_i, & if \ \Delta d_i < -\bar{d}_i; \\
\left[\alpha_i, \beta_i\right], & if \ \Delta d_i = -\bar{d}_i; \\
\beta_i, & if \ \Delta d_i \geq -\bar{d}_i;
\end{cases}
\end{equation}
and by the Sherman-Morrison formula, 
\begin{equation}\label{eq:smoothcurve}
\sigma\left\{\left[Q + \diag(\bar{d}) + \Delta d_i E_{ii}\right]^{-1}\right\}_{ii} = \sigma\left( V_{ii} -  \frac{\Delta d_i V_{ii}^2}{1+\Delta d_i \cdot V_{ii}} \right)
=\frac{\sigma V_{ii}}{1+\Delta d_i V_{ii}}.
\end{equation}
Then finding a solution to (\ref{eq:onedim}) is equivalent to finding the intersection point between a nonlinear curve (\ref{eq:smoothcurve}) and the piecewise linear curve (\ref{eq:subg_pieces}), with the constraint $\Delta d_i > -\frac{1}{V_{ii}}$ in Lemma \ref{lem:1dimfeas}. Such an intersection point is
guaranteed to exist as $\displaystyle \lim_{\Delta d_i \mapsto +\infty}\frac{\sigma V_{ii}}{1+\Delta d_i V_{ii}} = 0$ and that
 $\beta_i > 0$.
By evaluating (\ref{eq:smoothcurve}) at $\Delta d_i = -\bar{d}_i$,  the solution to (\ref{eq:onedim}) is
\begin{equation}\label{eq:delta_d_value}
\Delta d_i^* = \begin{cases}
\frac{\sigma}{\beta_i} - \frac{1}{V_{ii}}, &  if  \ -\bar{d}_i < -\frac{1}{V_{ii}} \ or \ \sigma\frac{V_{ii}}{1-\bar{d}_i \cdot V_{ii}} > \beta_i; \\
-\bar{d}_i, & if \ -\bar{d}_i \geq -\frac{1}{V_{ii}} \ and \ \alpha_i \leq \sigma\frac{V_{ii}}{1-\bar{d}_i \cdot V_{ii}} \leq \beta_i; \\
\frac{\sigma}{\alpha_i} - \frac{1}{V_{ii}}, &  if \ -\bar{d}_i \geq -\frac{1}{V_{ii}} \ and \ \sigma\frac{V_{ii}}{1-\bar{d}_i \cdot V_{ii}} < \alpha_i.
\end{cases}
\end{equation}
 Figure \ref{fig:cross} illustrates the case of $\alpha_i \leq \frac{\sigma V_{ii}}{1-\bar{d}_i V_{ii}} \leq \beta_i$ and $-\bar{d}_i \geq -\frac{1}{V_{ii}}$, where the intersection takes place at $\Delta d_i^* = -\bar{d}_i$. We further remark that $\alpha_i = 0$ would incur no numerical problem because $-\bar{d}_i \geq -\frac{1}{V_{ii}}$ and 
 $ \sigma\frac{V_{ii}}{1-\bar{d}_i \cdot V_{ii}} < 0$ cannot be simultaneously satisfied (recall that $\sigma > 0 $ and  $V_{ii} > 0$).

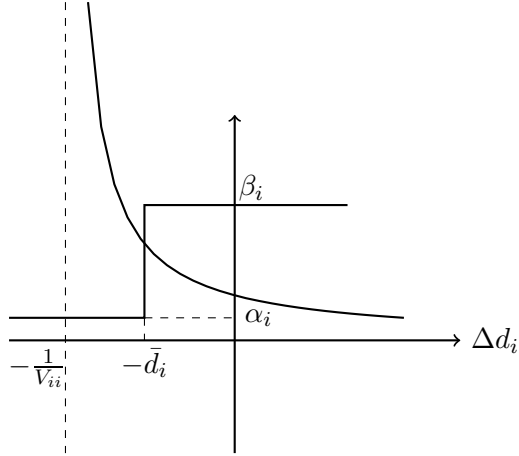
\begin{figure}[htp]
\centering
\begin{tikzpicture}[scale=1.5]

 \draw[ thick, domain=-1.3:1.5] plot (\x, {0.6/(\x+1.5)});
    \draw [->,thick] (0,-1)--(0,2) node (yaxis) [above] {};
    \draw [->,thick] (-2,0)--(2,0) node (xaxis) [right] {$\Delta d_i$};
    
    \draw [dashed] (-1.5, 3) -- (-1.5,-1) ;
    \draw [thick] (-2,0.2) -- (-0.8,0.2) -- (-0.8,1.2) -- (1,1.2);
    \draw [dashed] (-0.8,0.2) -- (0,0.2) node (alpha) [right] {$\alpha_i$};
    \draw [dashed] (-0.8,0.2) -- (-0.8,0);
       \node at (-0.8,-0.2) {$-\bar{d}_i$}; 
        \node at (0.15, 1.35) {$\beta_i$}; 
        
        \node at (-1.75, -0.25) {$-\frac{1}{V_{ii}}$}; 
\end{tikzpicture}
\caption{Illustration of the case when the optimal solution to (\ref{eq:onedim}) is $\Delta d_i^* = -\bar{d}_i$}
\label{fig:cross}
\end{figure}

 As in typical primal barrier algorithms, we update $\sigma$ whenever problem (\ref{eq:sep_perturb}) is solved to some satisfactory precision.
Again we use $s(\bar{d})$ defined in $(\ref{eq:MinNorm_subg})$ as our measure of optimality, and update $\sigma$ 
according to the following rule,
\begin{equation}\label{eq:updatesig}
\sigma \leftarrow  \max(\textsc{sml\_sig}, \textsc{sig\_upd}\cdot \sigma), \ \ if \ \ \frac{s(d)}{\|\beta\|_2} \leq \textsc{subg\_tol}.
\end{equation} 
\textsc{sml\_sig} is a safe-guard parameter to avoid  $\sigma$ to become too small. Our full algorithm to solve (\ref{eq:sep}) is summarized in Algorithm \ref{alg:sep}. Note that the most expensive step in each iteration is 
a single rank one update of $V$, which takes $O(n^2)$ time with a small constant factor.
\begin{algorithm}[h!]
 \label{alg:sep}
 \SetAlgoLined
 \KwData{$Q,  \alpha \in \Re_+^n, \beta \in \Re_{++}^n,  \sigma > 0, \bar{d} \in \Re^n$ such that $Q+\diag(\bar{d})\succ 0$;}
 \KwResult{Vector $\bar{d}$ that is feasible and solves (\ref{eq:sep}) approximately.}
 $V = \left[Q + \diag(\bar{d})\right]^{-1}$ \;
 \For{$k = 1$ \KwTo $maxIter$ }{
 	Compute index $i:=\arg\max_j \left\{\left|s(d)_j\right|\right\}$\;
	Update $\bar{d}_i\leftarrow \bar{d}_i + \Delta d_i^* $ where $\Delta d_i^*$ is computed by (\ref{eq:delta_d_value}) \; 
	Update $V$ using (\ref{eq:updateV}) \;
	Update $\sigma$ using rule (\ref{eq:updatesig}) \;
	Terminate if some termination rule is met\;
 }
 \caption{A primal-barrier coordinate minimization algorithm to solve (\ref{eq:sep})}
\end{algorithm}

\subsection{Implementation Details}
In our implementation and computational experiments, we set $\textsc{sml\_sig}= 10^{-5}$, $\textsc{sig\_upd}=0.8$ and $\textsc{subg\_tol}=0.03$. We choose initial $\bar{d}$ to be $-1.5\lambda_{\min}(Q)$ times the identity matrix, where $\lambda_{\min}(\cdot)$ is the minimal eigenvalue. With $Q$ normalized to have matrix 2-norm 1, initial $\sigma$ is selected to be the median value of the set
\[
\left\{\frac{u_i}{V_{ii}}\right\}_{i=1}^n, \ \ where \ \ u_i \in \partial g_i(\bar{d}_i).
\]
The intuition is that we want the information from $g(\cdot)$ and $\log\det(Q+\diag(\cdot))$  to be ``mixed" at the initial point.
In every $n$ iterations, we check our improvement of the objective value, and terminate our algorithm if the relative 
improvement in last $n$ iterations is less than a parameter \textsc{smll\_prgrss}, which we set at 5E-4. We implement Algorithm \ref{alg:sep} in C language on a Mac OS X system and exploit Apple's Accelerate framework (to vectorize computation) and their implementation of cblas library whenever necessary. We wrap our implementation as a MATLAB mex function to be called within the MATLAB environment in later experiments.

\section{Computational Experiments}\label{sec:comp}
We conduct three numerical experiments to validate our contributions. We implement the cutting surface procedure Algorithm \ref{alg:cutsurf} in the MATLAB environment. 
Convex quadratically relaxations (\ref{eq:myrelax}) are solved using the open source interior point code IPOPT \cite{ipopt} through the MATLAB interface they provided.
Some other separation procedures, i.e., projected RLT cuts \cite{SBL08c} used in our third numerical experiment, are implemented using Yalmip \cite{yalmip} and linear programming 
routines of Gurobi. Sometimes the MATLAB overhead is not negligible, especially when Yalmip is used to prepare inputs to optimization solvers. In these scenarios, we report
only the aggregated time used by optimization solvers only, and remark that the Yalmip overhead can be avoided given a more efficient implementation.

\subsection{Algorithm \ref{alg:sep} versus interior point methods for SDP to solve (\ref{eq:sep})}
In this section, we illustrate by numerical experiments that our Algorithm \ref{alg:sep} can solve 
(\ref{eq:sep}) to moderate precision in significantly shorter time than general purpose interior point algorithm 
for SDPs. 

We generate $Q \in \Scal^n$ with each entry i.i.d  $\Ncal(0,1)$, and then normalized such that 
$\|Q\|_2 = 1$.  $\{\alpha_i\}$ are generated uniformly from $[0,0.5]$ while $\{\beta_i\}$ uniformly from $[0.5,1]$. 
We report the objective values and solver time (wall clock time) used by CSDP (which shows better or equivalent performance than two other interior 
point softwares, SeDuMi and SDPT3 on our instances).  The RelErr column reports the relative differences between objective values 
reported by CSDP and Algorithm \ref{alg:sep} and the final column is the ratio between CSDP time and time used by Algorithm \ref{alg:sep}.
\begin{table}[htdp]
\caption{Comparison between CSDP and Algorithm \ref{alg:sep} to solve (\ref{eq:sep})}
\begin{center}
\begin{tabular}{|c|rr|rr|c|c|}
\hline
\multirow{2}{*}{n}& \multicolumn{2}{c|}{CSDP}&\multicolumn{2}{c|}{Alg \ref{alg:sep}}&\multirow{2}{*}{RelErr}&\multirow{2}{*}{SpeedUp}\\
\cline{2-5}
 & obj & time(s) & obj  & time(s) & &  \\
\hline \hline
50 &   32.35   & 0.108  & 32.36   & 0.002  & 3.8E-4  & 40X \\
100 &  68.30   & 0.355  & 68.32   & 0.014  &  3.8E-4 &  24X \\
200  &135.08 &   1.160  &135.13  &  0.071   & 3.7E-4 &  16X \\
400&  274.36  &   4.813  &274.48 &   0.406   & 4.4E-4 &  12X  \\
\hline
\end{tabular}
\end{center}
\label{default}
\end{table}%

Clearly Algorithm 2 find near optimal strictly feasible solutions to (\ref{eq:sep}) in time at least a magnitude shorter than CSDP. Although
the speed-up ratio decreases as $n$ becomes larger, we remark that nonconvex instances of (\ref{eq:qp}) with $n=50$ are already considered 
difficult for current global solvers.

\subsection{Cutting surface procedure Algorithm \ref{alg:cutsurf} versus Buchheim-Wiegele SDP}\label{subsec:compBW}
In this section we compare our cutting surface procedure Algorithm \ref{alg:cutsurf} with the semidefinite relaxation  
(\ref{eq:buchSDP}) in \cite{Buchheim2013}, on nonconvex integer problems where $S_i = \{-3, ..., 3\}$.  In \cite{Buchheim2013} the authors
developed a branch-and-bound algorithm Q-MIST based on solving semidefinite relaxations (\ref{eq:buchSDP}). It was shown that the Q-MIST algorithm 
compares favorably to Couenne \cite{couenne}, a general purpose global solver for mixed-integer nonlinear programs. However, they only tested instances when
$n\leq 60$. We observe that the cost of solving (\ref{eq:buchSDP}) using interior point methods increases significantly when $n>50$, while our cutting surface procedure
provides the almost identical strength of lower bounds at least one magnitude faster. 
 
 We generate random test instances similarly as in \cite{Buchheim2013}. For the sake of completeness we repeat the settings here. Matrix $Q$ is generated randomly by $Q = \sum_{i=1}^n \mu_i v_i v_i^T$, where
for a percentage $p$ (parameter used to control the level of convexity of $Q$), the first 
$\lfloor pn/100 \rfloor$ number of $\mu_i$ are chosen randomly from $[-1,0]$, and the rest of them 
are chosen randomly from $[0,1]$. Next, each $v_i$ is a random vector of length $n$ with entries independently and uniformly generated from $[-1,1]$, then normalized such that  $\|v_i\|_2 = 1$. Finally the $q$
vector in (\ref{eq:qp}) has all entries uniformly generated from $[-1,1]$. As a baseline for comparison, we run the general purpose global solver BARON
\cite{sahinidis:gamsbaron:12.1.0} for 600 seconds on each instance and record the best upper bound (feasible objective value) and the relative gap. 
We then compare the relaxations for the cases $n=30, 50, 70, 100$, $p=0.2, 0.5, 0.8$ and report our computational results in table \ref{tab:IntQP}. Since in \cite{Buchheim2013} (\ref{eq:buchSDP}) is solved by treating constraints $\ell_i(x_i) \leq X_{ii} \leq u_i(x_i)$ as cutting planes, and the number of cutting planes added is very small, for a more fair comparison, we use the lower bounds provided by the full model (\ref{eq:buchSDP}) while report only the running time of their ``initial model" by replacing
$\ell_i(x_i) \leq X_{ii} \leq u_i(x_i)$ with a single constraint $X_{ii} \leq (L_i + R_i)x_i - L_i R_i.$ (The right hand side is a scalar 9 in our test case.)
All the columns reporting relative gaps are computed by
\[
Gap := \frac{UB-LB}{|UB|} \times 100\%,
\]
where $LB$ is the corresponding lower bound, i.e., solution value of relaxations. The column ``\#it" is the number of iterations used by Algorithm \ref{alg:cutsurf}, and ``$T_{cut}$" is the percentage of time used by the separation procedure (Algorithm \ref{alg:sep}).

\begin{table}[htdp]
\begin{center}
\begin{tabular}{c c||c|r||c|r|r||c|r|c|r|c|}
\multirow{2}{*}{n}& \multirow{2}{*}{p} & \multicolumn{2}{c||}{BARON(600s)} & \multicolumn{3}{c||}{BW-SDP (CSDP)} & \multicolumn{5}{c|}{Algorithm 1 (IPOPT)}\\
\cline{3-12}
& & UB & \multicolumn{1}{c||}{Gap} & LB & \multicolumn{1}{c|}{Gap} & Time & LB & \multicolumn{1}{c|}{Gap} & Time & \#it & $T_{cut}$\\
\hline
\multirow{3}{*}{30} &0.2 &-164.33 & 0.0\% & -186.11 & 13.3\% & \textbf{0.32} & -186.26 & 13.3\% & 0.42 & 11 & 2.2\% \\
&0.5 &-210.28 & 0.0\%  & -235.91 & 12.2\% & \textbf{0.29} & -236.22 & 12.3\% & 0.58 & 12 & 2.0\%\\
&0.8 &-226.28 & 0.0\%& -241.79 & 6.9\% & 0.31 & -242.26 & 7.1\% & \textbf{0.30} & 7 & 2.2\% \\
\hline
\multirow{3}{*}{50} &0.2&-206.11 & 52.5\%& -256.51 & 24.5\% & 1.82 & -257.12 & 24.8\% & \textbf{0.70} & 14 & 4.1\% \\
&0.5 &-345.27 & 43.4\% & -407.59 & 18.1\% & 1.84 & -408.43 & 18.3\% &  \textbf{0.64} & 11 & 4.3\% \\
&0.8 &-407.87 & 13.0\% & -442.63 & 8.5\% & 1.91 & -444.43 & 9.0\% &  \textbf{0.27} & 4 & 3.7\%\\
\hline
\multirow{3}{*}{70} &0.2&-429.50 & 43.0\%& -526.21 & 22.5\% & 8.85 & -528.14 & 23.0\% & \textbf{0.31} & 4 & 6.5\%  \\
&0.5 &-486.38 & 63.8\% & -594.08 & 22.1\% & 7.50 & -595.11 & 22.4\% &  \textbf{1.16} & 15 & 6.4\% \\
&0.8 &-536.57 & 62.9\% & -623.00 & 16.1\% & 8.09 & -624.31 & 16.4\% &  \textbf{0.52} & 7 & 6.9\% \\
\hline
\multirow{3}{*}{100} &0.2&-633.17 & 484.4\% & -820.70 & 29.6\% & 43.40 & -822.43 & 29.9\% &  \textbf{0.90} & 8 & 9.3\% \\
&0.5 &-711.53 & 478.8\% & -829.61 & 16.6\% & 43.65 & -831.26 & 16.8\% &  \textbf{0.98} & 9 & 9.9\% \\
&0.8 &-683.95 & 407.6\% & -855.45 & 25.1\% & 41.45 & -857.24 & 25.3\% &  \textbf{0.94} & 8 & 9.5\% \\
\hline
\end{tabular}
\end{center}
\caption{Lower bounding schemes for randomly generated (\ref{eq:qp}) with $S_i = \{-3, -2, ...,3\}$}
\label{tab:IntQP}
\end{table}%

BARON is able to solve all three instances of $n=30$ to optimality within 600 seconds, while for all other instances, the remaining gaps are
significantly larger than those produced by of BW-SDP and our Algorithm \ref{alg:cutsurf}. In all instances, Algorithm \ref{alg:cutsurf}
provides only slightly weaker bounds than BW-SDP, but in significantly less time when $n\geq 50$. The numbers of iterations
used by Algorithm \ref{alg:cutsurf} remains below 15 for all instances. The percentage of time used by our separation procedure
increases as $n$ increases, but remains below 10\% for all instances.

\subsection{BoxQP instances: Comparison with the projected SDP+RLT approach by Saxena, Bonami and Lee}
In our last numerical experiment, we compare our cutting surface procedure with the projected SDP+RLT procedure proposed in \cite{SBL08c}
for the BoxQP problem, where $S_i = [0,1], \forall i$. We remark that when specialized to BoxQP problems, our procedure is similar to the 
projected SDP+RLT procedure in  \cite{SBL08c}, in the following sense: 
\begin{enumerate}
\item Both Algorithm \ref{alg:cutsurf} and projected SDP+RLT procedure
generate convex quadratic relaxations with multiple quadratic constraints; 
\item Both Algorithm \ref{alg:cutsurf} 
and projected SDP+RLT procedure have an underlying semidefinite relaxation model (BW-SDP versus lifted SDP+RLT relaxation for BoxQP), 
and produce convex quadratic relaxations that are shown to capture most of the strength of corresponding SDP relaxations; 
\item Both Algorithm \ref{alg:cutsurf} and projected SDP+RLT procedure employ a first-order feasible approximate method to generate new cutting 
surfaces (primal-barrier coordinate minimization versus projected subgradient in \cite{SBL08c});
\end{enumerate}
On the other hand, our approach is different from the projected SDP+RLT procedure for the following reasons:
\begin{enumerate}
\item Our procedure exploits more nonconvexity in $S_i$, while the projected SDP+RLT only exploits variable bounds;
\item For the case of BoxQP problems, our  Algorithm \ref{alg:cutsurf} essentially exploit only the diagonal RLT constraints
\[0 \leq X_{ii} \leq x_i, \ \ \forall i,\] 
while ignoring other off-diagonal RLT constraints. Therefore our procedure is theoretically weaker than the (\textbf{ProjSDP}) model in
Theorem 3 of \cite{SBL08c}. However, this loss is remedied by the fact that we can employ a more efficient separation procedure, i.e., Algorithm \ref{alg:sep}, versus
the projected subgradient algorithm in \cite{SBL08c}, which requires an eigenvalue factorization in each iteration.
\end{enumerate}

In order to further exploit the off-diagonal RLT inequalities, we combine the linear cutting plane procedure 
(\textbf{ProjLP}) in \cite{SBL08c} into Algorithm \ref{alg:cutsurf}. We remark that (\textbf{ProjLP}) essentially projects down the full RLT inequalities and 
generates linear valid inequalities in the original variable space by solving some simple linear programs with $O(n^2)$ number of variables, and is computationally very cheap.

Again motivated by the (\textbf{MIQCP-Initial}) reformulation in \cite{SBL08c}, we augment the (\ref{eq:myrelax}) 
model with a convex inequality generated by splitting $Q$ into its convex and concave parts and introducing an additional scalar variable $\tau$,
\begin{equation*}
\begin{aligned}
\min_{v, x} & \ \ \ v + q^T x \\
s.t. & \ \ v \geq x^T Q x + \sum_{i: d_i<0} d_i (x_i^2 - \ell_i(x_i)) +  \sum_{i: d_i>0} d_i (x_i^2 - u_i(x_i)), \ \ \forall d \in \Dcal \\
& \ \ v \geq x^T Q^{+} x + \tau, \\
& \ \ L_i \leq x_i \leq R_i, \ \ i=1,...,n.
\end{aligned}
\end{equation*}
where $Q = Q^{+} + Q^{-}, \  Q^{+} = \sum_{i: \lambda_i > 0} \lambda_i v_i v_i^T$ and $\{(\lambda_i, v_i)\}$ are the eigen-pairs of $Q$. Next we enforce the nonconvex
constraint $\tau \geq x^T Q^{-} x$ by separating the following set by using the methodology of (\textbf{ProjLP}) in \cite{SBL08c},
\[
(x,\tau,v) \in \left\{(x,\tau, v) \middle| \exists X, \begin{array}{l} \langle Q^{+}, X \rangle + \tau - v \leq 0 \\
\langle Q^{-}, X \rangle - \tau \leq 0 \\
L_i \leq x_i \leq R_i , \ \forall i \\
y_{ij}^{-} (x)\leq X_{ij} \leq y_{ij}^{+} (x), \forall i, j\\
\end{array} \right\}
\]
where 
\[
\begin{aligned}
y^{-}_{ij}(x) &= \max \{R_i x_j + R_j x_i -R_i R_j, L_i x_j + L_j x_i - L_i L_j\}, \ \forall i, j \\
y^{+}_{ij}(x) &= \min \{L_i x_j + R_j x_i -L_i R_j, R_i x_j + L_j x_i - R_i L_j\}, \ \forall i, j.
\end{aligned}
\]
We name this augmented procedure ``Alg \ref{alg:cutsurf}+" in our later comparison.

Finally we present our numerical results on all 90 BoxQP instances in \cite{VanNem05b} and compare to the results of ``W3" method reported in \cite{SBL08c}, which corresponds to their implementation of the projected SDP+RLT cutting model (\textbf{ProjSDP}).  (Though we have no information on the specific machine they are using,
it is extremely unlikely that their computer/implementation is several orders of magnitude slower than ours.)
Similar to their comparison strategy, we use the gap between the optimal values and the naive RLT relaxations as a baseline, and calculate how much more gap can be closed by more sophisticated bounding procedures (\textbf{ProjSDP}) and our aforementioned ``Alg \ref{alg:cutsurf}+" procedure. We report our summary in Table \ref{tab:BoxQPsum} and leave the detailed results of each instance in the Appendix. The ``Diff" column is the average difference of the amount of gap closed by these two procedures. A negative number means Alg \ref{alg:cutsurf}+ is worse. We remark that in all instances, Alg \ref{alg:cutsurf}+ is only weaker with a small amount, but requires significantly less time to compute. On the other hand, the difference in time required by these two procedures is several order of magnitude. 
 
\begin{table}[htdp]
\begin{center}
\begin{tabular}{|lc|crc|rc|}
\hline
\multirow{2}{*}{Groups} & \#inst. & \multicolumn{3}{c|}{Average \% gap closed} & \multicolumn{2}{c|}{Average Time (s)} \\
\cline{3-7}
& & SBL & Alg \ref{alg:cutsurf}+ & Diff. & \multicolumn{1}{c}{SBL} &  Alg \ref{alg:cutsurf}+ \\ 
\hline
spar020*-030*&18&97.14\%&94.65\%&-2.49\%&119.73&0.38 \\
spar040* &24&96.37\%&91.51\%&-4.86\%&82.31 &0.46 \\
spar050*-070*&21&93.41\%&89.61\%&-3.80\%&209.92&0.63 \\
spar080*-100*&27&94.24\%&92.89\%&-1.34\%&618.74&0.84 \\
\hline
\end{tabular}
\end{center}
\caption{Summary of comparison with the projected SDP+RLT procedure in \cite{SBL08c} on BoxQP instances}
\label{tab:BoxQPsum}
\end{table}%

We believe the main reason for the huge time difference is that we only search for convex cutting surfaces in a very restricted form, i.e., with Hessian matrices simply diagonal perturbations of the original quadratic function. This restriction greatly simplifies the separation SDP problem one needs to solve. Moreover, this diagonal perturbation approach apparently captures much of problem structure very effectively, e.g., the separability in the constraints $x_i \in S_i, \ \forall i,$ and only small number of iterations are needed
to derive a strong relaxation.

One may argue that like all cutting plane procedures, the SBL procedure has a strong tailing effect. Could it the case that most of the time used by SBL procedure is devoted to closing an insignificant amount of gap? Fortunately, \cite{SBL08c} also reports the time needed to close the
amount of gap that is only 1\% less than the final amount of gap closed, in the columns titled ``W3(Adj)" in many of their tables. We remark that in many instances, Alg \ref{alg:cutsurf}+
provides better bounds than that of ``W3(Adj)", including 7 out of 9 largest instances ``spar100*", while Alg \ref{alg:cutsurf}+ is still several order of magnitude faster (see table \ref{tab:BoxQPlarge}). This clearly 
demonstrates the advantage of Alg \ref{alg:cutsurf}+ over projected SDP+RLT procedures on BoxQP problems, especially on the larger instances.

\begin{table}[htdp]
\begin{center}
\begin{tabular}{|c|rr|rrr|rc|}
\hline
\multicolumn{1}{|c|}{\multirow{2}{*}{Instance}} & \multicolumn{1}{c}{\multirow{2}{*}{RLT}} & \multicolumn{1}{c|}{\multirow{2}{*}{OPT}} & \multicolumn{3}{c|}{\% duality gap closed} & \multicolumn{2}{c|}{Time taken (s)} \\
& & & \multicolumn{1}{c}{W3(Adj)} & \multicolumn{1}{c}{Alg \ref{alg:cutsurf}+} & \multicolumn{1}{c|}{Diff.} & \multicolumn{1}{c}{W3(Adj)} &  \multicolumn{1}{c|}{Alg \ref{alg:cutsurf}+} \\  \hline
spar100-025-1&-7660.75&-4027.50&91.36\%&91.66\%&0.30\%&385.64&1.09 \\
spar100-025-2&-7338.50&-3892.56&91.16\%&91.90\%&0.74\%&321.79&1.55\\
spar100-025-3&-7942.25&-4453.50&92.26\%&91.38\%&-0.88\%&299.23&1.26\\
spar100-050-1&-15415.75&-5490.00&92.62\%&93.88\%&1.26\%&286.59&0.93\\
spar100-050-2&-14920.50&-5866.00&93.13\%&93.50\%&0.37\%&288.09&1.11\\
spar100-050-3&-15564.25&-6485.00&94.81\%&94.49\%&-0.32\%&279.41&0.99\\
spar100-075-1&-23387.50&-7384.20&94.84\%&96.06\%&1.22\%&366.24&0.92\\
spar100-075-2&-22440.00&-6755.50&95.47\%&96.04\%&0.57\%&330.70&1.00\\
spar100-075-3&-23243.50&-7554.00&95.06\%&95.49\%&0.43\%&303.30&1.23\\
\hline
\end{tabular}
\end{center}
\caption{Comparison with the projected SDP+RLT on 9 largest BoxQP instances}
\label{tab:BoxQPlarge}
\end{table}%

\section{Conclusion and Possible Extensions}

We propose a cutting surface procedure based on multiple diagonal perturbations to derive strong but efficiently solvable convex quadratic relaxations for nonconvex quadratic problem with separable constraints $x_i \in S_i, \forall i$.
The corresponding separation problem is a highly structured semidefinite program (SDP) with convex non-smooth objective. We propose to solve the separation problem with a specialized primal-barrier coordinate minimization algorithm. We show that our separation algorithm is at least one order of magnitude faster than interior point method for 
SDPs on problems up to a few hundred variables. On nonconvex quadratic integer problems, our cutting surface procedure provides lower bounds of almost the same strength 
with the SDP bound used by Buchheim and Wiegele  \cite{Buchheim2013} in their branch-and-bound code Q-MIST, while our procedure is at least an order of magnitude faster on problems with dimension greater than 70. 
Combined with linear projected RLT cutting planes proposed in \cite{SBL08c}, our procedure provides slightly weaker bounds than
the projected SDP+RLT cutting surface procedure by Saxena, Bonami and Lee \cite{SBL08c}, but in several order of magnitude shorter time.

There are many avenues to extend our work to devise more effective branch-and-bound algorithms for mixed-integer nonlinear program with nonconvex quadratics. First,
if there are linear equality constraints $Ax = b$, our separation strategy can be revised to exploit this. For example, in (\ref{eq:sep}), $Q+\diag(d)$ only needs to be positive  
semidefinite over the null space of $A$, although computationally care has to be taken to deal with the case that the primal optimal solution is not finitely attained.
Secondly, it is reasonable to expect that when incorporate our diagonal perturbation procedure into a branch-and-bound framework to solve (\ref{eq:qp}) globally, the new algorithm should perform better than Q-MIST, at least on relatively larger instances.
Finally, since our procedure can be thought as a partial lifting procedure that lifts only the diagonal entries $X_{ii}$, and exploiting one-variable valid constraints $\ell_i(x_i) \leq x_i^2 \leq u_i(x_i)$, it would be interesting to identify important multi-variable valid constraints and generalize our approach to a sparse lifting or sparse perturbation approach.

\bibliographystyle{alpha}
\bibliography{./allref}

\begin{thebibliography}{SWMF12}

\bibitem[Ans09]{Ans07}
Kurt~M. Anstreicher.
\newblock Semidefinite programming versus the reformulation-linearization
  technique for nonconvex quadratically constrained quadratic programming.
\newblock {\em Journal of Global Optimization}, 43:471--484, 2009.

\bibitem[BC12]{BurerChen2013}
Samuel~A. Burer and Jieqiu Chen.
\newblock {Globally solving nonconvex quadratic programming problems via
  completely positive programming}.
\newblock {\em Mathematical Programming Computation}, 4:33--52, 2012.

\bibitem[Bel12]{couenne}
Pietro Belotti.
\newblock {COUENNE: a user's manual}.
\newblock Technical report, Department of Mathematical Sciences, Clemson
  University, 2012.

\bibitem[BEP09]{BillionnetElloumiPlateauQCR}
Alain Billionnet, Sourour Elloumi, and Marie-CHristine Plateau.
\newblock Improving the performance of standard solvers for quadratic 0-1
  programs by a tight convex reformulation: The qcr method.
\newblock {\em Discrete Applied Mathematics}, 157:1185--1197, 2009.

\bibitem[BW13]{Buchheim2013}
Christoph Buchheim and Angelika Wiegele.
\newblock {Semidefinite relaxations for non-convex quadratic mixed-integer
  programming}.
\newblock {\em {Mathematical Programming}}, 141:435--452, 2013.

\bibitem[DL13]{DongLinderoth2013}
Hongbo Dong and Jeff Linderoth.
\newblock {On valid inequalities for quadratic programming with continuous
  variables and binary indicators}.
\newblock In {\em The 16th Conference on Integer Programming and Combinatorial
  Optimization; Lecture Notes in Computer Science}, volume 7801, pages
  169--180, 2013.

\bibitem[FG07]{Frangioni_Gentile_2007}
Antonio Frangioni and Claudio Gentile.
\newblock {SDP diagonalizations and perspective cuts for a class of
  nonseparable MIQP}.
\newblock {\em Operations Research Letters}, 35(2):181--185, March 2007.

\bibitem[FLM13]{FampaLee2013}
Marcia Fampa, Jon Lee, and Wendel Melo.
\newblock On global optimization with indefinite quadratics.
\newblock Technical report, Issac Newton Institute Preprint NI13066, 2013.

\bibitem[GL10]{Gunluk_Linderoth_2010}
Oktay G\"{u}nl\"{u}k and Jeff Linderoth.
\newblock Perspective reformulations of mixed integer nonlinear programming
  with indicator variables.
\newblock {\em Mathematical Programming (Series B)}, 124(1-2):183--205, 2010.

\bibitem[L{\"{o}}f04]{yalmip}
J.~L{\"{o}}fberg.
\newblock Yalmip: A toolbox for modeling and optimization in matalb.
\newblock In {\em Proceedings of the CACSD Conference}, Taipei, Taiwan, 2004.

\bibitem[RRW10]{RendlRinaldiWiegele10}
Franz Rendl, Giovanni Rinaldi, and Angelika Wiegele.
\newblock {Solving Max-Cut to optimality by intersecting semidefinite and
  polyhedral relaxations}.
\newblock {\em {Math. Program., Ser. A}}, 121:307--335, 2010.

\bibitem[Sah13]{sahinidis:gamsbaron:12.1.0}
N.~V. Sahinidis.
\newblock {\em {BARON 12.1.0: Global Optimization of Mixed-Integer Nonlinear
  Programs, {\em User's Manual}}}, 2013.

\bibitem[SBL11]{SBL08c}
Anureet Saxena, Pierre Bonami, and Jon Lee.
\newblock Convex relaxations of mixed integer quadratically constrained
  programs: Projected formulations.
\newblock {\em Mathematical Programming, Series A}, 130(2):359--413, 2011.

\bibitem[SWMF12]{Skjalwesterlund2012}
A.~Skj{\"{a}}l, T.~Westerlund, R.~Misener, and C.~A. Floudas.
\newblock {A generalization of the classical $\alpha$BB convex underestimation
  via Diagonal and Nondiagonal Quadratic Terms}.
\newblock {\em {Journal of Optimization Theory and Applications}},
  154:462--490, 2012.

\bibitem[VN05]{VanNem05b}
D.~Vandenbussche and G.~Nemhauser.
\newblock A branch-and-cut algorithm for nonconvex quadratic programs with box
  constraints.
\newblock {\em Mathematical Programming}, 102(3):559--575, 2005.

\bibitem[WB06]{ipopt}
Andreas W{\"{a}}chter and Lorenz~T. Biegler.
\newblock {On the Implementation of a Primal-Dual Interior Point Filter Line
  Search Algorithm for Large-Scale Nonlinear Programming}.
\newblock {\em Mathematical Programming}, 106(1):25--57, 2006.

\bibitem[WGS12]{WenGoldfarb2012}
Zaiwen Wen, Donald Goldfarb, and Katya Scheinberg.
\newblock {Block Coordinate Descent Methods for Semidefinite Programming}.
\newblock In Miguel~F. Anjos and Jean~B. Lasserre, editors, {\em {Handbook on
  Semidefinite, Conic and Polynomial Optimization}}, volume 166 of {\em
  {International Series in Operations Research \& Management Science}}, pages
  533--564. {Springer}, 2012.

\bibitem[ZSL10]{ZhengSunLi2010}
Xiaojin Zheng, Xiaoling Sun, and Duan Li.
\newblock Improving the performance of miqp solvers for quadratic programs with
  cardinality and minimum threshold constraints: A semidefinite program
  approach.
\newblock Manuscript, {Nov.} 2010.

\end{thebibliography}

\newpage

\begin{longtable}{|c|r|r|rrr|rc|}
\caption{Full comparison for BoxQP instances} \\

\hline
\multicolumn{1}{|c|}{\multirow{2}{*}{Instance}} & \multicolumn{1}{c|}{\multirow{2}{*}{RLT}} & \multicolumn{1}{c|}{\multirow{2}{*}{OPT}} & \multicolumn{3}{c|}{\% duality gap closed} & \multicolumn{2}{c|}{Time taken (s)} \\
& & & \multicolumn{1}{c}{SBL} & \multicolumn{1}{c}{Alg \ref{alg:cutsurf}+} & \multicolumn{1}{c|}{Diff.} & \multicolumn{1}{c}{SBL} &  \multicolumn{1}{c|}{Alg \ref{alg:cutsurf}+} \\  
\hline
\endfirsthead

\multicolumn{8}{l}%
{{\tablename\ \thetable{} -- continued from previous page}} \\
\hline
\multirow{2}{*}{Instance} & \multicolumn{1}{c|}{\multirow{2}{*}{RLT}} & \multicolumn{1}{c|}{\multirow{2}{*}{OPT}} & \multicolumn{3}{c|}{\% duality gap closed} & \multicolumn{2}{c|}{Time taken (s)} \\ 
& & & \multicolumn{1}{c}{SBL} & \multicolumn{1}{c}{Alg \ref{alg:cutsurf}+} & \multicolumn{1}{c|}{Diff.} & \multicolumn{1}{c}{SBL} &  \multicolumn{1}{c|}{Alg \ref{alg:cutsurf}+} \\ 
\hline
\endhead

\hline \multicolumn{8}{r}{{Continued on next page}} \\ 
\endfoot

\hline \hline 
\endlastfoot

\hline
spar020-100-1&-1066.00&-706.50&98.28\%&96.85\%&-1.43\%&43.06&0.52 \\
spar020-100-2&-1289.00&-856.50&94.61\%&91.65\%&-2.96\%&2.49&0.33\\
spar020-100-3&-1168.50&-772.00&99.98\%&99.88\%&-0.10\%&408.36&0.54\\
spar030-060-1&-1454.75&-706.00&93.84\%&90.75\%&-3.09\%&13.40&0.19\\
spar030-060-2&-1699.50&-1377.17&97.35\%&95.45\%&-1.90\%&50.79&0.54\\
spar030-060-3&-2047.00&-1293.50&95.62\%&89.67\%&-5.95\%&33.92&0.30\\
spar030-070-1&-1569.00&-654.00&89.88\%&88.99\%&-0.89\%&12.33&0.17\\
spar030-070-2&-1940.25&-1313.00&98.51\%&95.34\%&-3.17\%&188.12&0.50\\
spar030-070-3&-2302.75&-1657.40&96.07\%&94.59\%&-1.48\%&31.57&0.47\\
spar030-080-1&-2107.50&-952.73&95.04\%&90.62\%&-4.42\%&23.57&0.18\\
spar030-080-2&-2178.25&-1597.00&100.00\%&98.73\%&-1.27\%&226.60&0.45\\
spar030-080-3&-2403.50&-1809.78&99.20\%&98.59\%&-0.61\%&339.41&0.42\\
spar030-090-1&-2423.50&-1296.50&99.21\%&96.79\%&-2.42\%&53.39&0.35\\
spar030-090-2&-2667.00&-1466.84&98.56\%&96.10\%&-2.46\%&56.98&0.44\\
spar030-090-3&-2538.25&-1494.00&99.88\%&99.03\%&-0.85\%&565.88&0.36\\
spar030-100-1&-2602.00&-1227.13&98.38\%&95.34\%&-3.04\%&30.28&0.25\\
spar030-100-2&-2729.25&-1260.50&96.93\%&92.34\%&-4.59\%&18.85&0.28\\
spar030-100-3&-2751.75&-1511.05&97.16\%&93.03\%&-4.13\%&56.21&0.53\\
spar040-030-1&-1088.00&-839.50&97.64\%&92.02\%&-5.62\%&117.60&0.79\\
spar040-030-2&-1635.00&-1429.00&91.60\%&74.38\%&-17.22\%&68.46&0.65\\
spar040-030-3&-1303.25&-1086.00&93.04\%&77.32\%&-15.72\%&104.80&0.69\\
spar040-040-1&-1606.25&-837.00&87.85\%&83.55\%&-4.30\%&43.71&0.42\\
spar040-040-2&-1920.75&-1428.00&99.61\%&95.06\%&-4.55\%&114.57&0.50\\
spar040-040-3&-2039.75&-1173.50&92.94\%&88.12\%&-4.82\%&35.77&0.34\\
spar040-050-1&-2146.25&-1154.50&93.71\%&87.38\%&-6.33\%&43.86&0.35\\
spar040-050-2&-2357.25&-1430.98&95.17\%&89.31\%&-5.86\%&54.14&0.36\\
spar040-050-3&-2616.00&-1653.63&94.81\%&89.95\%&-4.86\%&44.05&0.39\\
spar040-060-1&-2872.00&-1322.67&93.47\%&88.65\%&-4.82\%&46.67&0.26\\
spar040-060-2&-2917.50&-2004.23&96.20\%&91.18\%&-5.02\%&80.14&0.52\\
spar040-060-3&-3434.00&-2454.50&99.18\%&97.06\%&-2.12\%&134.80&0.82\\
spar040-070-1&-3144.00&-1605.00&98.85\%&95.72\%&-3.13\%&101.61&0.41\\
spar040-070-2&-3369.25&-1867.50&98.56\%&94.76\%&-3.80\%&94.96&0.37\\
spar040-070-3&-3760.25&-2436.50&97.83\%&94.15\%&-3.68\%&112.96&0.41\\
spar040-080-1&-3846.50&-1838.50&98.43\%&94.72\%&-3.71\%&134.03&0.30\\
spar040-080-2&-3833.00&-1952.50&98.26\%&95.78\%&-2.48\%&47.06&0.24\\
spar040-080-3&-4361.50&-2545.50&97.98\%&96.11\%&-1.87\%&83.80&0.86\\
spar040-090-1&-4376.75&-2135.50&98.22\%&94.45\%&-3.77\%&103.96&0.48\\
spar040-090-2&-4357.75&-2113.00&98.04\%&92.53\%&-5.51\%&83.69&0.33\\
spar040-090-3&-4516.75&-2535.00&99.00\%&97.01\%&-1.99\%&81.20&0.45\\
spar040-100-1&-5009.75&-2476.38&98.72\%&97.14\%&-1.58\%&81.56&0.46\\
spar040-100-2&-4902.75&-2102.50&97.93\%&95.72\%&-2.21\%&121.76&0.41\\
spar040-100-3&-5075.75&-1866.07&95.87\%&94.17\%&-1.70\%&40.16&0.24\\
spar050-030-1&-1858.25&-1324.50&96.40\%&90.23\%&-6.17\%&165.74&0.89\\
spar050-030-2&-2334.00&-1668.00&90.74\%&85.47\%&-5.27\%&79.42&0.50\\
spar050-030-3&-2107.25&-1453.61&91.45\%&83.55\%&-7.90\%&121.65&0.71\\
spar050-040-1&-2632.00&-1411.00&97.23\%&92.86\%&-4.37\%&177.96&0.45\\
spar050-040-2&-2923.25&-1745.76&94.06\%&87.88\%&-6.18\%&85.63&0.40\\
spar050-040-3&-3273.50&-2094.50&97.53\%&93.25\%&-4.28\%&180.96&0.63\\
spar050-050-1&-3536.00&-1198.41&87.88\%&90.36\%&2.48\%&50.22&0.36\\
spar050-050-2&-3500.50&-1776.00&93.13\%&89.00\%&-4.13\%&67.20&0.30\\
spar050-050-3&-4119.75&-2106.10&95.01\%&91.59\%&-3.42\%&93.62&0.36\\
spar060-020-1&-1757.25&-1212.00&91.00\%&85.57\%&-5.43\%&163.42&0.77\\
spar060-020-2&-2238.25&-1925.50&90.22\%&85.51\%&-4.71\%&226.11&1.22\\
spar060-020-3&-2098.75&-1483.00&85.78\%&79.44\%&-6.34\%&121.83&0.45\\
spar070-025-1&-3832.75&-2538.91&92.61\%&87.48\%&-5.13\%&249.97&1.17\\
spar070-025-2&-3248.00&-1888.00&89.79\%&86.47\%&-3.32\%&191.12&0.86\\
spar070-025-3&-4167.25&-2812.28&90.68\%&85.24\%&-5.44\%&214.40&0.83\\
spar070-050-1&-7210.75&-3252.50&94.40\%&92.10\%&-2.30\%&240.93&0.69\\
spar070-050-2&-6620.00&-3296.00&95.77\%&93.53\%&-2.24\%&283.03&0.45\\
spar070-050-3&-7522.00&-4306.50&99.36\%&97.00\%&-2.36\%&693.28&0.46\\
spar070-075-1&-11647.75&-4655.50&96.90\%&96.06\%&-0.84\%&365.50&0.58\\
spar070-075-2&-10884.75&-3865.15&95.57\%&94.45\%&-1.12\%&293.31&0.58\\
spar070-075-3&-11262.25&-4329.40&96.18\%&94.81\%&-1.37\%&342.92&0.56\\
spar080-025-1&-4840.75&-3157.00&93.91\%&89.06\%&-4.85\%&524.07&1.16\\
spar080-025-2&-4378.50&-2312.34&88.14\%&87.17\%&-0.97\%&257.62&0.79\\
spar080-025-3&-5130.25&-3090.88&91.59\%&90.17\%&-1.42\%&420.61&1.17\\
spar080-050-1&-9783.25&-3448.10&92.65\%&92.42\%&-0.23\%&355.97&0.45\\
spar080-050-2&-9270.00&-4449.20&97.50\%&95.21\%&-2.29\%&892.96&0.62\\
spar080-050-3&-10029.75&-4886.00&95.58\%&93.60\%&-1.98\%&435.41&0.55\\
spar080-075-1&-15250.75&-5896.00&96.93\%&96.02\%&-0.91\%&387.48&0.64\\
spar080-075-2&-14246.50&-5341.00&96.95\%&95.72\%&-1.23\%&450.96&0.37\\
spar080-075-3&-14961.50&-5980.50&96.11\%&95.16\%&-0.95\%&416.32&0.54\\
spar090-025-1&-6171.50&-3372.50&90.12\%&88.36\%&-1.76\%&408.73&0.90\\
spar090-025-2&-6015.00&-3500.29&89.45\%&85.12\%&-4.33\%&444.30&0.95\\
spar090-025-3&-6698.25&-4299.00&90.57\%&85.10\%&-5.47\%&446.74&1.16\\
spar090-050-1&-12584.00&-5152.00&95.02\%&93.82\%&-1.20\%&506.72&0.48\\
spar090-050-2&-11920.50&-5386.50&96.61\%&96.15\%&-0.46\%&514.05&0.83\\
spar090-050-3&-12514.00&-6151.00&95.90\%&93.56\%&-2.34\%&991.04&0.45\\
spar090-075-1&-19054.25&-6267.45&95.66\%&95.81\%&0.15\%&462.16&0.62\\
spar090-075-2&-18245.50&-5647.50&95.92\%&95.40\%&-0.52\%&784.59&0.60\\
spar090-075-3&-18929.50&-6450.00&96.11\%&95.87\%&-0.24\%&602.44&0.44\\
spar100-025-1&-7660.75&-4027.50&92.36\%&91.66\%&-0.70\%&670.15&1.09\\
spar100-025-2&-7338.50&-3892.56&92.16\%&91.90\%&-0.26\%&538.03&1.55\\
spar100-025-3&-7942.25&-4453.50&93.26\%&91.38\%&-1.88\%&656.59&1.26\\
spar100-050-1&-15415.75&-5490.00&93.62\%&93.88\%&0.26\%&757.14&0.93\\
spar100-050-2&-14920.50&-5866.00&94.13\%&93.50\%&-0.63\%&929.91&1.11\\
spar100-050-3&-15564.25&-6485.00&95.81\%&94.49\%&-1.32\%&747.46&0.99\\
spar100-075-1&-23387.50&-7384.20&95.84\%&96.06\%&0.22\%&1509.96&0.92\\
spar100-075-2&-22440.00&-6755.50&96.47\%&96.04\%&-0.43\%&936.61&1.00\\
spar100-075-3&-23243.50&-7554.00&96.06\%&95.49\%&-0.57\%&657.84&1.23 \\

\end{longtable}

\end{document}